\newtheorem{theorem}{Theorem}
\theoremstyle{plain}
\newtheorem{corollary}{Corollary}
\newtheorem{definition}{Definition}
\newtheorem{lemma}{Lemma}
\newtheorem{remark}{Remark}
\begin{document}
\title[Ambartsumian Type Theorems]{Ambarzumyan Type Theorems on a Time Scale}
\author{A. Sinan Ozkan}
\curraddr{Department of Mathematics, Faculty of Science, Cumhuriyet
University 58140 \\
Sivas, TURKEY}
\email{sozkan@cumhuriyet.edu.tr}
\subjclass[2000]{ 31B20, 39A12, 34B24}
\keywords{Ambarzumyan Theorem, Time scale, Sturm-Liouville equation; inverse
problem; dynamic equations.}

\begin{abstract}
In this paper, we consider a Sturm--Liouville dynamic equation with Robin
boundary conditions on time scale and investigate the conditions which
{}{}guarantee that the potential function is specified.
\end{abstract}

\maketitle

\section{\protect\bigskip \textbf{Introduction}}

Time scale theory was introduced by Hilger in order to unify continuous and
discrete analysis \cite{Hilger}. From then on this approach has received a
lot of attention and has applied quickly to various area in mathematics.
Sturm--Liouville theory on time scales was studied first by Erbe and Hilger 
\cite{Erbe} in 1993. Some important results on the properties of eigenvalues
and eigenfunctions of a Sturm--Liouville problem on time scales were given
in various publications (see e.g. \cite{Agarwal}, \cite{Amster}, \cite%
{Amster2}, \cite{Davidson}-\cite{Davidson3}, \cite{Erbe2}, \cite{Guseinov}-%
\cite{Sun} and the references therein).

Inverse spectral problems consist in recovering the coefficients of an
operator from their spectral characteristics. Althouhgh there are vast
literature for inverse Sturm--Liouville problems on a continuous interval,
there are no study on the general time scales. For Sturm-Liouville operator
on a continuous interval, the study which starts inverse spectral theory,
was published by Ambarzumyan \cite{Ambartsumian} in 1929. He prove that: if $%
q$ is continuous function on $(0,1)$ and the eigenvalues of the problem

\begin{eqnarray*}
&&\left. -y^{\prime \prime }+q(t)y=\lambda y,\text{ }t\in (0,1)\right.  \\
&&\left. y^{\prime }(0)=y^{\prime }(1)=0\right. 
\end{eqnarray*}%
are given as $\lambda _{n}=n^{2}\pi ^{2},$ $n\geq 0$ then $q\equiv 0.$ 

Freiling and Yurko \cite{Freiling} generalized this result as $\lambda
_{0}=\int\limits_{0}^{1}q(t)dt$ implies $q\equiv \lambda _{0}.$

The goal of this paper to prove an Ambarzumyan type theorem on a general
time scale and to apply it on the a special time scale. \ In our main
result, Theorem 1, we generalize the results of Freiling and Yurko for
Sturm-Liouville operator with more general boundary conditions on a time
scale. 

\section{\textbf{Preliminaries and Main Results}}

If $\mathbb{T}$ is a closed subset of $%
%TCIMACRO{\U{211d} }%
%BeginExpansion
\mathbb{R}
%EndExpansion
$ it called as a time scale. The jump operators $\sigma $, $\rho $ and
graininess operator on $\mathbb{T}$ are defined as follow: 
\begin{eqnarray*}
&&\left. \sigma :\mathbb{T}\rightarrow \mathbb{T},\text{ }\sigma \left(
t\right) =\inf \left\{ s\in \mathbb{T}:s>t\right\} \text{ if }t\neq \sup 
\mathbb{T}\text{,}\right.  \\
&&\left. \rho :\mathbb{T}\rightarrow \mathbb{T},\text{ }\rho \left( t\right)
=\sup \left\{ s\in \mathbb{T}:s<t\right\} \text{ if }t\neq \inf \mathbb{T}%
\text{,}\right.  \\
&&\left. \sigma \left( \sup \mathbb{T}\right) =\sup \mathbb{T},\text{ }\rho
\left( \inf \mathbb{T}\right) =\inf \mathbb{T}\text{,}\right.  \\
&&\left. \mu :\mathbb{T}\rightarrow \lbrack 0,\infty )\text{ }\mu \left(
t\right) =\sigma \left( t\right) -t\text{.}\right. 
\end{eqnarray*}

A point of $\mathbb{T}$ is called as left-dense, left-scattered,
right-dense, right-scattered and isolated if $\rho (t)=t$, $\rho (t)<t$, $%
\sigma (t)=t$, $\sigma (t)>t$ and $\rho (t)<t<\sigma (t),$ respectively.

A function $f:\mathbb{T}\rightarrow 
%TCIMACRO{\U{211d} }%
%BeginExpansion
\mathbb{R}
%EndExpansion
$ is called rd-continuous on $\mathbb{T}$ if it is continuous at all
right-dense points and has left-sided limits at all left-dense points in $%
\mathbb{T}$. The set of rd-continuous functions on $\mathbb{T}$ is denoted
by $C_{rd}(\mathbb{T)}$ or $C_{rd}$.

Put $\mathbb{T}^{k}:=\left\{ 
\begin{array}{cc}
\mathbb{T}-\{\sup \mathbb{T}\}\text{,} & \sup \mathbb{T}\text{ is
left-scattered} \\ 
\mathbb{T}\text{,} & \text{the other cases}%
\end{array}%
\right. ,$ $\mathbb{T}^{k^{2}}:=\left( \mathbb{T}^{k}\right) ^{k}.$

Let $t\in \mathbb{T}^{k}.$ Suppose that for given any $\varepsilon >0,$
there exist a neighborhood $U=\left( t-\delta ,t+\delta \right) \cap \mathbb{%
T}$ such that 
\begin{equation*}
\left\vert \left[ f\left( \sigma \left( t\right) \right) -f\left( s\right) %
\right] -f^{\Delta }\left( t\right) \left[ \sigma \left( t\right) -s\right]
\right\vert \leq \varepsilon \left\vert \sigma \left( t\right) -s\right\vert
\end{equation*}%
for all $s\in U$ then, $f$ is called differentiable at $t\in \mathbb{T}^{k}$%
. We call $f^{\Delta }\left( t\right) $ the delta derivative of $f$ at $t.$
A function $F:\mathbb{T}\rightarrow 
%TCIMACRO{\U{211d} }%
%BeginExpansion
\mathbb{R}
%EndExpansion
$ defined as $F^{\Delta }(t)=f(t)$ for all $t\in \mathbb{T}^{k}$ is called
an antiderivative of $f$ on $\mathbb{T}$. In this case the Cauchy integral
of $f$ is defined by

\begin{equation*}
\int\limits_{a}^{b}f\left( t\right) \triangle t=F\left( b\right) -F\left(
a\right) ,\text{ for }a,b\in \mathbb{T}\text{.}
\end{equation*}%
Some important relations whose proofs appear in \cite{Bohner}, chapter1 will
be needed. We collect them in the following lemma.

\begin{lemma}
Let $f:\mathbb{T}\rightarrow 
%TCIMACRO{\U{211d} }%
%BeginExpansion
\mathbb{R}
%EndExpansion
$, $g:\mathbb{T}\rightarrow 
%TCIMACRO{\U{211d} }%
%BeginExpansion
\mathbb{R}
%EndExpansion
$ be two functions and $t\in \mathbb{T}^{k}.$\newline
i) lf $f^{\Delta }\left( t\right) $ exists, then $f$ is continuous at $t$;%
\newline
ii) if $t$ is right-scattered and $f$ is continuous at $t$, then $f$ is
differentiable at $t$ and $f^{\Delta }\left( t\right) =\dfrac{f^{\sigma
}\left( t\right) -f\left( t\right) }{\sigma \left( t\right) -t},$ where $%
f^{\sigma }\left( t\right) =f\left( \sigma \left( t\right) \right) ;$\newline
iii) if $f^{\Delta }\left( t\right) $ exists, then $f^{\sigma }\left(
t\right) =f\left( t\right) +\mu (t)f^{\Delta }\left( t\right) ;$\newline
iv) if $f^{\Delta }\left( t\right) $,$g^{\Delta }\left( t\right) $ exist and 
$(fg)(t)$ is defined, then $(fg)^{\Delta }(t)=\left( f^{\Delta }g+f^{\sigma
}g^{\Delta }\right) (t)$ and if $\left( gg^{\sigma }\right) (t)\neq 0$, then 
$\left( \dfrac{f}{g}\right) ^{\Delta }(t)=\left( \dfrac{f^{\Delta
}g-fg^{\Delta }}{gg^{\sigma }}\right) (t)$;\newline
v) if $f\in C_{rd}(\mathbb{T)}$, then it has an antiderivative on $\mathbb{T}
$;\newline
vi) if $\mathbb{T}$ consists of only isolated points and $a,b\in \mathbb{T}$
with $a<b,$ then $\int\limits_{a}^{b}f\left( t\right) \triangle t=\sum_{t\in %
\left[ a,b\right) \cap \mathbb{T}}\mu (t)f(t);$\newline
vii) if $f(t)\geq 0$ for all $t\in \left[ a,b\right] \cap \mathbb{T}$ and $%
\int\limits_{a}^{b}f\left( t\right) \triangle t=0,$ then $f(t)\equiv 0.$
\end{lemma}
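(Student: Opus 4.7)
The plan is to derive each assertion directly from the delta-derivative definition by careful bookkeeping, handling the two possible local geometries (right-dense versus right-scattered) separately before combining. All seven statements are classical and proved in \cite{Bohner}, so I would mimic that strategy rather than invent anything new.

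For (i), fix $\varepsilon>0$, take $s$ in the neighborhood $U$ furnished by the definition, and apply the defining inequality both at $s$ and at the choice $s=t$. Subtracting yields a bound of the form $|f(t)-f(s)|\le |f^{\Delta}(t)||t-s|+\varepsilon\bigl(|\sigma(t)-s|+|\sigma(t)-t|\bigr)$, and shrinking $U$ makes the right-hand side arbitrarily small, giving continuity at $t$. For (ii), $t$ right-scattered lets me shrink $U$ so that no element of $\mathbb{T}$ lies strictly between $t$ and $\sigma(t)$; evaluating the defining inequality at $s=t$ forces the candidate $f^{\Delta}(t)=(f^{\sigma}(t)-f(t))/(\sigma(t)-t)$, and continuity of $f$ at $t$ lets me verify this candidate fulfills the definition for every other $s\in U$. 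Part (iii) is then immediate: at right-dense $t$ both sides equal $f(t)$ since $\mu(t)=0$, and at right-scattered $t$ it is just a rearrangement of (ii).

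For (iv) I would use the standard algebraic identity
\begin{equation*}
(fg)(\sigma(t))-(fg)(s)=f^{\sigma}(t)\bigl[g(\sigma(t))-g(s)\bigr]+g(s)\bigl[f(\sigma(t))-f(s)\bigr],
\end{equation*}
apply the defining inequality to each bracketed difference, and exploit continuity from (i) to control the factor $g(s)$; the quotient rule follows by applying the product rule to $f=(f/g)\,g$. Part (v) is constructed piecewise: on maximal right-dense subintervals, rd-continuity reduces to ordinary continuity and the classical Riemann construction supplies an antiderivative, while at right-scattered points the recursion $F(\sigma(t))=F(t)+\mu(t)f(t)$ prescribed by (iii) extends $F$ compatibly. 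Part (vi) is the telescoping version of that recursion: when every point of $\mathbb{T}$ is isolated, summing $F(\sigma(t))-F(t)=\mu(t)f(t)$ over $t\in[a,b)\cap\mathbb{T}$ yields $F(b)-F(a)=\sum \mu(t)f(t)$.

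Finally, for (vii), I would argue by contradiction: suppose $f(t_{0})>0$ for some $t_{0}\in[a,b]\cap\mathbb{T}$. If $t_{0}$ is right-scattered, the discrete contribution $\mu(t_{0})f(t_{0})$ obtained from the piecewise representation underlying (v)--(vi) is strictly positive; if $t_{0}$ is right-dense, rd-continuity yields a right-neighborhood on which $f>c>0$, producing a positive Riemann-type contribution. In either case the integral is strictly positive, contradicting the hypothesis. The step I expect to be the main obstacle is precisely this last one, because on a general time scale right-dense and right-scattered points can interlace in complicated ways, so one must be careful in splitting $[a,b]\cap\mathbb{T}$ into a continuous part and a scattered part on which the integral takes its two distinct forms before a localization argument can be carried out.
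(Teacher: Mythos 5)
The paper itself does not prove this lemma: it is presented as a collection of facts whose ``proofs appear in \cite{Bohner}, chapter 1'', so the only meaningful comparison is with the standard arguments there. Your sketches for (i)--(iii), (vi) and (vii) do follow those standard lines and would work. Two points need attention, one minor and one serious. The minor one is in (iv): deriving the quotient rule by applying the product rule to $f=(f/g)\,g$ is circular as written, since the product rule presupposes that $f/g$ is already known to be delta-differentiable at $t$; the usual fix is to establish $(1/g)^{\Delta}=-g^{\Delta}/(gg^{\sigma})$ directly from the definition first, or to verify the candidate quotient against the definition. Note also that your route produces $(f^{\Delta}g^{\sigma}-f^{\sigma}g^{\Delta})/(gg^{\sigma})$, which still has to be reduced to the stated form $(f^{\Delta}g-fg^{\Delta})/(gg^{\sigma})$ using (iii).

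The serious gap is in (v). You propose to build the antiderivative piecewise, using ``maximal right-dense subintervals'' for a classical Riemann construction and the recursion $F(\sigma(t))=F(t)+\mu(t)f(t)$ at right-scattered points. A general time scale admits no such decomposition: take $\mathbb{T}$ to be the Cantor set, in which there are no nontrivial subintervals at all, uncountably many right-dense points are not interior to any interval of $\mathbb{T}$, and the right-scattered points are not well-ordered in a way that supports a recursion. There is then no ``continuous part'' on which the Riemann construction applies and no sequence of scattered points over which to telescope. The actual proof of (v) in Bohner--Peterson is the deepest item on the list; it proceeds through pre-differentiable functions, the induction principle on time scales, and a mean value theorem, none of which your sketch touches. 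You correctly identify the interlacing of right-dense and right-scattered points as the main obstacle, but you attach that worry to (vii), where your localization argument in fact survives it; it is (v) where the obstacle genuinely defeats the construction you describe.
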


Throughout this paper we assume that $\mathbb{T}$ is a bounded time scale, $%
a=\inf \mathbb{T}$ and $b=\sup \mathbb{T}$. Consider the boundary value
problem $L=L\left( q,h_{a},h_{b}\right) $ generated by the Sturm--Liouville
dynamic equation%
\begin{equation}
\ell y:=-y^{\Delta \Delta }(t)+q(t)y^{\sigma }(t)=\lambda y^{\sigma }(t),%
\text{ \ \ }t\in \mathbb{T}^{k^{2}}
\end{equation}%
subject to the boundary conditions%
\begin{eqnarray}
&&\left. y^{\Delta }(a)-h_{a}y(a)=0\right. \medskip \\
&&\left. y^{\Delta }(\rho (b))-h_{b}y(\rho (b))=0\right.
\end{eqnarray}%
where $q(t)$ is real valued continiuous function on $\mathbb{T}$, $%
h_{a},h_{b}\in 
%TCIMACRO{\U{211d} }%
%BeginExpansion
\mathbb{R}
%EndExpansion
$ and $\lambda $ is the spectral parameter. Additionally, we assume that $%
a\neq $ $\rho (b)$, $1+h_{a}\mu (a)\neq 0$ and $1+h_{b}\mu (\rho (b))\neq 0.$

\begin{definition}
The values of the parameter for which the equation (1) has nonzero solutions
satisfy (2) and (3), are called eigenvalues and the corresponding nontrivial
solutions are called eigenfunctions.
\end{definition}

It is proven in \cite{Bohner} that all eigenvalues of the problem (1)-(3)
are real numbers.

\begin{definition}
A solution $y$ of (1) is said to have a zero at $t\in \mathbb{T}$ if $y(t)=0$%
, and it has a node between $t$ and $\sigma (t)$ if $y(t)y(\sigma (t))<0$. A
generalized zero of $y$ is then defined as a zero or a node.
\end{definition}

\begin{lemma}[\protect\cite{Agarwal}]
The eigenvalues of (1)-(3) may be arranged as $-\infty <\lambda _{1}<\lambda
_{2}<\lambda _{3}<...$ and an eigenfunction corresponding to $\lambda _{k+1}$
has exactly $k$ generalized zeros in the open interval $(a,b)$.
\end{lemma}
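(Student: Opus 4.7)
The plan is to establish both the discrete unbounded-from-above ordering of the spectrum and the exact count of generalized zeros by combining an entire-function representation of the eigenvalues with a time-scale Prüfer-type transformation and a monotonicity-in-$\lambda$ argument. First I would fix the initial-data solution $y(t;\lambda)$ of (1) satisfying $y(a;\lambda)=1$ and $y^{\Delta }(a;\lambda)=h_{a}$, so that (2) holds automatically. Standard existence theory for dynamic equations guarantees that $\lambda \mapsto y(t;\lambda)$ and $\lambda \mapsto y^{\Delta }(t;\lambda)$ are entire for every $t\in \mathbb{T}$; hence the eigenvalues are the zeros of the entire characteristic function $\Delta (\lambda ):=y^{\Delta }(\rho (b);\lambda )-h_{b}y(\rho (b);\lambda )$, which shows at once that the spectrum is at most countable and without finite accumulation point.

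Next I would introduce a Prüfer angle $\theta (t;\lambda )$ via $y=R\sin \theta$ and $y^{\Delta }=R\cos \theta$, with $\theta (a;\lambda )\in \lbrack 0,\pi )$ determined by $h_{a}$; the hypotheses $1+h_{a}\mu (a)\neq 0$ and $1+h_{b}\mu (\rho (b))\neq 0$ are exactly what is needed to ensure that $(y,y^{\Delta })$ never vanishes simultaneously, so $R>0$ throughout. At right-dense points, $\theta$ satisfies a first-order dynamic equation of the form $\theta ^{\Delta }=\cos ^{2}\theta +(\lambda -q)\sin ^{2}\theta$, which is strictly increasing in $\lambda$ across each multiple of $\pi$. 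At right-scattered $t$, Lemma~1(iii) lets one compute the discrete jump $\theta (\sigma (t))-\theta (t)$ explicitly in terms of $\mu (t)$, $q(t)$, and $\lambda$. The key observation is that each generalized zero in the sense of Definition~3 corresponds to $\theta$ crossing an integer multiple of $\pi$: an ordinary zero at a right-dense point when $\theta$ reaches $k\pi$, and a node in $(t,\sigma (t))$ when the discrete jump at a right-scattered point carries $\theta$ across $k\pi$.

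Counting these crossings reduces the problem to showing that $\Phi (\lambda ):=\theta (\rho (b);\lambda )-\theta (a;\lambda )$ is continuous and strictly increasing in $\lambda$, tends to $+\infty$ as $\lambda \to +\infty$, and is bounded below as $\lambda \to -\infty$. The boundary condition (3) then becomes $\theta (\rho (b);\lambda )\equiv \theta _{b}\pmod{\pi }$ for a fixed $\theta _{b}\in \lbrack 0,\pi )$ determined by $h_{b}$, and the intermediate value theorem produces an increasing sequence $\lambda _{1}<\lambda _{2}<\cdots $ such that the eigenfunction corresponding to $\lambda _{k+1}$ has exactly $k$ generalized zeros in $(a,b)$.

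The main obstacle is the careful bookkeeping at right-scattered points: the discrete update $\theta (t)\mapsto \theta (\sigma (t))$ need not be strictly monotone in $\lambda$ for every $\lambda$, and one must use the nondegeneracy hypotheses to guarantee that every $\pi$-crossing is actually registered by the oscillation count and that $R$ never vanishes across the jump. Once this discrete Prüfer calculus is in place, the remaining monotonicity and asymptotic statements parallel the continuous case closely and follow from integrating the Prüfer equation on the rd-continuous part of $\mathbb{T}$ together with summing the explicit jumps over the right-scattered points, as in Lemma~1(vi).
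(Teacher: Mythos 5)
The paper offers no proof of this lemma: it is imported verbatim from \cite{Agarwal} (building on the Sturmian theory of \cite{Erbe}), so there is no internal argument to compare yours against. Judged on its own terms, your Pr\"{u}fer-type outline is the right general strategy and is close in spirit to how such results are actually proved, but as written it has two substantive gaps. First, the asymptotic claim that $\Phi (\lambda )=\theta (\rho (b);\lambda )-\theta (a;\lambda )\rightarrow +\infty $ as $\lambda \rightarrow +\infty $ is false on a time scale with finitely many points (exactly the situation of the paper's closing Remark): there the problem is a finite-dimensional matrix eigenvalue problem, the number of generalized zeros is bounded by the number of points of $\mathbb{T}$, and the spectrum is finite, so your intermediate-value machinery cannot produce an infinite increasing sequence $\lambda _{1}<\lambda _{2}<\cdots $. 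Any correct proof must either treat the sequence as possibly terminating or split into cases according to the structure of $\mathbb{T}$; the lemma as used in the paper only needs the existence and extremality of $\lambda _{1}$ together with the zero count, but your argument should not assert more than is true.

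Second, and more seriously, the step you yourself flag as ``the main obstacle'' --- strict monotonicity in $\lambda $ of the discrete Pr\"{u}fer update at right-scattered points, and the guarantee that each jump registers exactly the right number of $\pi $-crossings --- is precisely the mathematical content of the lemma, and your proposal names it without resolving it. A single scattered step can in principle carry $\theta $ past more than one multiple of $\pi $ (which would overcount nodes relative to Definition~3, since $y(t)y(\sigma (t))<0$ detects only the parity of crossings), and the map $\theta (t)\mapsto \theta (\sigma (t))$ is a M\"{o}bius-type transformation in $\cot \theta $ whose monotonicity in $\lambda $ must be proved, not assumed. This is why the established proofs (Erbe--Hilger, Agarwal--Bohner--Wong) route the argument through Sturm separation/comparison theorems and a variational (Rayleigh-quotient) characterization of the eigenvalues rather than through a naive monotone-angle count. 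A further small misattribution: the nonvanishing of $(y,y^{\Delta })$, hence $R>0$, follows from uniqueness for the initial value problem, not from the hypotheses $1+h_{a}\mu (a)\neq 0$ and $1+h_{b}\mu (\rho (b))\neq 0$; those conditions are used in the paper's Lemma~4 to conclude $y^{\sigma }(a)\neq 0$ and $y^{\sigma }(\rho (b))\neq 0$.
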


\begin{lemma}
If $y(t)$ is an eigenfunction of the problem (1)-(3) then $y^{\sigma
}(a)\neq 0$ and $y^{\sigma }(\rho (b))\neq 0.$
\end{lemma}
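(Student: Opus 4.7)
The plan is to reduce the two nonvanishing statements to the nonvanishing of $y(a)$ and $y(\rho(b))$ via Lemma 1(iii), and then close each case by contradiction using uniqueness of solutions to the initial value problem associated with the dynamic equation (1).

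First I would combine Lemma 1(iii) with the boundary condition (2). Since $y^{\Delta}(a)$ exists, part (iii) of Lemma 1 gives
\[
y^{\sigma}(a)=y(a)+\mu(a)\,y^{\Delta}(a),
\]
and substituting $y^{\Delta}(a)=h_{a}y(a)$ from (2) yields $y^{\sigma}(a)=(1+h_{a}\mu(a))\,y(a)$. Because the hypothesis $1+h_{a}\mu(a)\neq 0$ is explicitly assumed, $y^{\sigma}(a)\neq 0$ is equivalent to $y(a)\neq 0$. Exactly the same computation at $t=\rho(b)$, using (3) and the assumption $1+h_{b}\mu(\rho(b))\neq 0$, gives $y^{\sigma}(\rho(b))=(1+h_{b}\mu(\rho(b)))\,y(\rho(b))$, reducing the right-endpoint claim to $y(\rho(b))\neq 0$.

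Next I would argue by contradiction. Suppose $y(a)=0$. Then (2) forces $y^{\Delta}(a)=h_{a}y(a)=0$. Rewriting the dynamic Sturm--Liouville equation (1) as the first-order system in $(y,y^{\Delta})$, the standard existence--uniqueness theorem for such systems on a time scale (cited from \cite{Bohner} or \cite{Agarwal}) implies that the only solution of (1) with vanishing initial data $y(a)=y^{\Delta}(a)=0$ is the trivial solution $y\equiv 0$. This contradicts the assumption that $y$ is an eigenfunction. The identical argument starting from $y(\rho(b))=0$, which together with (3) forces $y^{\Delta}(\rho(b))=0$ and hence $y\equiv 0$, disposes of the right endpoint.

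The only nontrivial ingredient is the IVP uniqueness, and this is the step I would flag as the main obstacle to state cleanly: one must make sure the uniqueness theorem is applicable at the endpoints $a$ and $\rho(b)$, which is guaranteed by $a\neq\rho(b)$ together with the hypothesis that $q\in C_{rd}(\mathbb{T})$ so that the coefficient matrix of the associated system is rd-continuous on $\mathbb{T}^{k}$. Everything else is an algebraic application of Lemma 1(iii) and the boundary conditions.
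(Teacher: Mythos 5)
Your proposal is correct and follows essentially the same route as the paper's own proof: reduce $y^{\sigma}(a)\neq 0$ and $y^{\sigma}(\rho(b))\neq 0$ to $y(a)\neq 0$ and $y(\rho(b))\neq 0$ via Lemma 1(iii) together with the boundary conditions and the hypotheses $1+h_{a}\mu(a)\neq 0$, $1+h_{b}\mu(\rho(b))\neq 0$, then rule out vanishing at the endpoints by the uniqueness theorem for initial value problems. Your added remark about verifying applicability of IVP uniqueness at the endpoints is a reasonable refinement of a step the paper invokes without comment.
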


\begin{proof}
It is clear from Lemma 1 that $y^{\sigma }(a)=y(a)+\mu (a)y^{\Delta }(a)=y(a)%
\left[ 1+h_{a}\mu (a)\right] $ and $y^{\sigma }(\rho (b))=y(\rho (b))+\mu
(\rho (b))y^{\Delta }(\rho (b))=y(\rho (b))\left[ 1+h_{b}\mu (\rho (b))%
\right] $. We claim that $y(a)\neq 0$ and $y(\rho (b))\neq 0$. Otherwise,
from (2) and (3) $y^{\Delta }(a)=0$ or $y^{\Delta }(\rho (b))=0$ hold, then
by the uniqueness theorem of the solution of initial value problems $y(t)$
is identically vanish which contradicts that it is the eigenfunction.
Therefore the proof is completed from the assumption $1+h_{a}\mu (a)\neq 0,$ 
$1+h_{a}\mu (\rho (b))\neq 0.$
\end{proof}

\begin{theorem}
Let $\lambda _{1}$ be the first eigenvalue of (1)-(3). If 
\begin{equation*}
\lambda _{1}\geq \frac{1}{\rho (b)-a}\{h_{a}-h_{b}+\int\limits_{a}^{\rho
(b)}q(t)\Delta t\},
\end{equation*}%
then $q(t)\equiv \lambda _{1}.$
\end{theorem}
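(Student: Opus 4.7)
The plan is to adapt the classical Ambarzumyan argument by introducing the Riccati-type quotient $u:=y_1^\Delta/y_1$, where $y_1$ is an eigenfunction for $\lambda_1$. By Lemma 3 the first eigenfunction has no generalized zeros in $(a,b)$, and by Lemma 4 combined with the standing assumptions $1+h_a\mu(a)\neq 0$ and $1+h_b\mu(\rho(b))\neq 0$, the values $y_1(a)$ and $y_1(\rho(b))$ are nonzero as well; one may therefore normalize so that $y_1$ and $y_1^{\sigma}$ are strictly positive on the relevant part of $\mathbb{T}$, which makes $u$ well defined on $[a,\rho(b)]\cap\mathbb{T}$.

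The key step is then a Riccati-type identity: applying the quotient rule in Lemma 1(iv) and substituting $y_1^{\Delta\Delta}=(q-\lambda_1)y_1^{\sigma}$ from equation (1) gives
\begin{equation*}
u^{\Delta}(t) \;=\; q(t)-\lambda_1 \;-\; \frac{(y_1^{\Delta}(t))^2}{y_1(t)\,y_1^{\sigma}(t)}.
\end{equation*}
Integrating from $a$ to $\rho(b)$ and using the boundary conditions (2) and (3), which amount to $u(a)=h_a$ and $u(\rho(b))=h_b$, I arrive at
\begin{equation*}
\lambda_1\bigl(\rho(b)-a\bigr) + \int_{a}^{\rho(b)} \frac{(y_1^{\Delta})^2}{y_1\,y_1^{\sigma}}\,\Delta t \;=\; h_a - h_b + \int_{a}^{\rho(b)} q(t)\,\Delta t.
\end{equation*}

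The hypothesis says exactly that the right-hand side is at most $\lambda_1(\rho(b)-a)$, so the non-negative integral on the left must vanish. By Lemma 1(vii) its integrand is then identically zero, and since $y_1\,y_1^{\sigma}>0$ this forces $y_1^{\Delta}\equiv 0$ on $\mathbb{T}^{k}$. Plugging this back into (1) and cancelling $y_1^{\sigma}\neq 0$ gives $q\equiv \lambda_1$.

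The step I expect to require the most care is the first one: one must verify that $y_1$ and $y_1^{\sigma}$ are of constant, nonzero sign at \emph{every} point of $[a,\rho(b)]\cap\mathbb{T}$ entering the integration, since otherwise the Riccati substitution would break down at an isolated point and the identity above would fail there. The combination of Lemma 3 with Lemma 4 and the non-degeneracy assumptions $1+h_a\mu(a)\neq 0$, $1+h_b\mu(\rho(b))\neq 0$ is precisely what rules this out, so the passage from these three ingredients to strict positivity of $y_1$ on all of $[a,b]\cap\mathbb{T}$ is the only technically delicate point of the argument.
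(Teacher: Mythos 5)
Your proposal is correct and follows essentially the same route as the paper: the identity $\bigl(y_1^{\Delta}/y_1\bigr)^{\Delta}=q-\lambda_1-(y_1^{\Delta})^2/(y_1y_1^{\sigma})$ is exactly the paper's key relation, and the integration over $[a,\rho(b)]$ together with the sign argument and the substitution of the constant eigenfunction back into (1) matches the paper's proof step for step. The only difference is cosmetic (your lemma numbering is shifted by one relative to the paper's), and your explicit attention to the positivity of $y_1y_1^{\sigma}$ at the endpoints is a point the paper handles the same way via its Lemmas 2 and 3.
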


\begin{proof}
Let $y_{1}(t)$ be the corresponding eigenfunction\ to $\lambda _{1}.$ From
eq(1) and Lemma 2 we can write on $\mathbb{T}^{k^{2}}$

\begin{equation*}
\dfrac{y_{1}^{\Delta \Delta }(t)}{y_{1}^{\sigma }(t)}=q(t)-\lambda _{1}.
\end{equation*}%
It is from the relation%
\begin{equation*}
\dfrac{y_{1}^{\Delta \Delta }(t)}{y_{1}^{\sigma }(t)}=\dfrac{\left[
y_{1}^{\Delta }(t)\right] ^{2}}{y_{1}^{\sigma }(t)y_{1}(t)}+\left[ \dfrac{%
y_{1}^{\Delta }(t)}{y_{1}(t)}\right] ^{\Delta }
\end{equation*}%
that 
\begin{equation*}
\left[ \dfrac{y_{1}^{\Delta }(t)}{y_{1}(t)}\right] ^{\Delta }=q(t)-\lambda
_{1}-\dfrac{\left[ y_{1}^{\Delta }(t)\right] ^{2}}{y_{1}^{\sigma }(t)y_{1}(t)%
}.
\end{equation*}%
\newline
From Lemma 3 we can integration of both sides from $a$ to $\rho (b)$.
Therefore the following equality is obtained 
\begin{eqnarray*}
\int\limits_{a}^{\rho (b)}\dfrac{\left[ y_{1}^{\Delta }(t)\right] ^{2}}{%
y_{1}^{\sigma }(t)y_{1}(t)}\Delta t &=&\dfrac{y_{1}^{\Delta }(a)}{y_{1}(a)}-%
\dfrac{y_{1}^{\Delta }(\rho (b))}{y_{1}(\rho (b))}+\int\limits_{a}^{\rho (b)}%
\left[ q(t)-\lambda _{1}\right] \Delta t \\
&=&h_{a}-h_{b}+\int\limits_{a}^{\rho (b)}q(t)\Delta t-\lambda _{1}\left(
\rho (b)-a\right) .
\end{eqnarray*}%
It can be seen from Lemma 2 and our hypothesis that the right side of the
last equality is negative and the left side is non-negative. Thus $%
y_{1}^{\Delta }(t)\equiv 0$ and so $y_{1}(t)$ is constant. Substituting $%
y_{1}(t)$ is constant into equation (1), it is concluded that $q(t)\equiv
\lambda _{1}$.
\end{proof}

\begin{corollary}
The first eigenvalue of the problem $-y^{\Delta \Delta }+q(t)y^{\sigma
}=\lambda y^{\sigma },$ $y^{\Delta }(a)=y^{\Delta }(\rho (b))=0$ is $\lambda
_{1}=\frac{1}{\rho (b)-a}\int\limits_{a}^{\rho (b)}q(t)\Delta t$ then, $%
q(t)\equiv \lambda _{1}.$
\end{corollary}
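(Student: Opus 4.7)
The plan is to deduce this corollary as a direct specialization of Theorem 1. The Neumann-type boundary conditions $y^{\Delta}(a)=y^{\Delta}(\rho(b))=0$ are precisely the conditions (2)--(3) with the choice $h_{a}=h_{b}=0$, and the assumption $1+h_{a}\mu(a)\neq 0$, $1+h_{b}\mu(\rho(b))\neq 0$ is automatic in this case. So I would first verify that the problem in the corollary falls under the framework $L(q,0,0)$ of Theorem 1.

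Next I would observe that with $h_{a}=h_{b}=0$, the lower bound in the hypothesis of Theorem 1 simplifies to $\frac{1}{\rho(b)-a}\int_{a}^{\rho(b)}q(t)\Delta t$. The corollary assumes \emph{equality} between $\lambda_{1}$ and this quantity, which in particular implies the required inequality $\lambda_{1}\geq \frac{1}{\rho(b)-a}\int_{a}^{\rho(b)}q(t)\Delta t$. Invoking Theorem 1 therefore yields $q(t)\equiv \lambda_{1}$ at once.

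Since this is a one-line reduction to Theorem 1, there is essentially no obstacle; the only thing worth being careful about is making sure the standing hypotheses of Theorem 1 (the non-degeneracy conditions on $a,\rho(b),\mu$) are still satisfied, which they are trivially when $h_{a}=h_{b}=0$. No separate time-scale calculation is needed, and no further use of Lemmas 1--3 is required beyond what Theorem 1 already invokes.
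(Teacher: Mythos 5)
Your reduction is exactly the intended argument: the paper gives no separate proof of this corollary precisely because it is the specialization of Theorem 1 to $h_{a}=h_{b}=0$, where the assumed equality trivially implies the inequality $\lambda _{1}\geq \frac{1}{\rho (b)-a}\int_{a}^{\rho (b)}q(t)\Delta t$. Your check of the standing non-degeneracy hypotheses is a sensible extra precaution but changes nothing.
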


This corollary is a generalization of the results of Freiling and Yurko \cite%
{Freiling} onto the time scale.

\begin{corollary}
Under the hypothesis $\int\limits_{a}^{\rho (b)}q(t)\Delta t=0$; if $%
q(t)\neq 0$, then the problem%
\begin{eqnarray*}
&&\left. -y^{\Delta \Delta }+q(t)y^{\sigma }=\lambda y^{\sigma },\right. \\
&&\left. y^{\Delta }(a)=y^{\Delta }(\rho (b))=0\right.
\end{eqnarray*}%
has at least one negative eigenvalue.
\end{corollary}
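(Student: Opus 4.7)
The plan is to argue by contradiction, using Theorem 1 (equivalently, Corollary 1) with the Neumann-type choice $h_a = h_b = 0$. Suppose, for a contradiction, that the problem has no negative eigenvalue; then in particular the first eigenvalue satisfies $\lambda_1 \geq 0$.

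Under our hypothesis $\int_a^{\rho(b)} q(t)\Delta t = 0$, the right-hand side of the inequality in Theorem 1 (with $h_a = h_b = 0$) is simply $0$. So the assumption $\lambda_1 \geq 0$ is exactly the hypothesis of Theorem 1, and applying it yields $q(t) \equiv \lambda_1$ on $\mathbb{T}$.

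Next I would plug this conclusion back into the integral assumption: since $q \equiv \lambda_1$,
\begin{equation*}
0 = \int_a^{\rho(b)} q(t)\,\Delta t = \lambda_1\bigl(\rho(b) - a\bigr),
\end{equation*}
and because $a \neq \rho(b)$ by the standing assumption on the time scale, this forces $\lambda_1 = 0$, hence $q(t) \equiv 0$. This contradicts the hypothesis $q(t) \not\equiv 0$, so $\lambda_1 < 0$ and the problem has at least one negative eigenvalue.

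There is no substantial obstacle here; the entire content is packaged into Theorem 1. The only point requiring care is checking that the assumptions of Theorem 1 hold for the problem in the corollary: the conditions $1 + h_a \mu(a) \neq 0$ and $1 + h_b \mu(\rho(b)) \neq 0$ are trivially satisfied with $h_a = h_b = 0$, so Theorem 1 applies directly.
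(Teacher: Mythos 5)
Your proof is correct and follows exactly the route the paper intends: the corollary is stated without proof as an immediate consequence of Theorem 1, and your contrapositive application with $h_a=h_b=0$, followed by integrating $q\equiv\lambda_1$ to force $\lambda_1=0$, is precisely that derivation. No gaps.
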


We conclude this paper with specializing our first result for a particular
time scale which consists of only isolated points.

\begin{remark}
Consider the time scale 
\begin{equation*}
\mathbb{T}=\{x_{k}\in 
%TCIMACRO{\U{211d} }%
%BeginExpansion
\mathbb{R}
%EndExpansion
:a=x_{0}<x_{1}<x_{2}<...<x_{n}=b\}
\end{equation*}%
and the following problem%
\begin{eqnarray*}
&&\left. -y^{\Delta \Delta }+q(t)y^{\sigma }=\lambda y^{\sigma },\right. \\
&&\left. y^{\Delta }(a)=y^{\Delta }(\rho (b))=0.\right.
\end{eqnarray*}%
If the first eigenvalue of the problem satisfy $\lambda _{1}\geq \max
\{q(t):t\in \mathbb{T}\},$ then $q(t)\equiv \lambda _{1}.$
\end{remark}

\end{document}